\newtheorem{thm}{\bf Theorem}[section]
\newtheorem{prop}[thm]{\bf Proposition}
\newtheorem{cor}[thm]{\bf Corollary}
\theoremstyle{definition}
\newtheorem{definition}[thm]{\bf Definition}
\theoremstyle{remark}
\newtheorem{remark}[thm]{\bf Remark}
\newtheorem{question}[thm]{\bf Question}
\newtheorem{example}[thm]{\bf Example}
\numberwithin{equation}{section}
\DeclareMathOperator{\type}{{type}}
\DeclareMathOperator{\tr}{{trace}}
\DeclareMathOperator{\Ext}{{Ext}}
\DeclareMathOperator{\Hom}{{Hom}}
\DeclareMathOperator{\soc}{{{soc}}}
\DeclareMathOperator{\gr}{{{gr}}}
\DeclareMathOperator{\ord}{{ord}}
\DeclareMathOperator{\eli}{{eli}}
\DeclareMathOperator{\gll}{gll}
\DeclareMathOperator{\ulr}{{ulr}}
\DeclareMathOperator{\ind}{{index}}
\def\f0{\mathbf{0}}
\def\fn{\mathfrak{n}}
\def\fm{\mathfrak{m}}
\def \QQ{\mathbb Q}
\def \CC{\mathbb C}
\def \RR{\mathbb R}
\newcommand{\ses}[3]{0 \to {#1} \to {#2} \to {#3} \to 0}
\begin{document}

\title[Elias Ideals]{Elias Ideals}

\author[Hailong Dao]{Hailong Dao}
\address{Hailong Dao\\ Department of Mathematics \\ University of Kansas\\405 Snow Hall, 1460 Jayhawk Blvd.\\ Lawrence, KS 66045}
\email{hdao@ku.edu}

 \begin{abstract}
Let $(R, \mathfrak m)$ be a one dimensional local Cohen-Macaulay ring. An $\mathfrak m$-primary ideal $I$ of $R$ is Elias if the types of $I$ and of $R/I$ are equal. Canonical and principal ideals are Elias, and Elias ideals are closed under inclusion. We give multiple characterizations of Elias ideals and concrete criteria to identify them. We connect Elias ideals to other well-studied definitions: Ulrich, $\fm$-full, integrally closed, trace ideals, etc.  Applications are given regarding canonical ideals, conductors and the Auslander index. 
 \end{abstract}
 
 \subjclass[MSC2020]{Primary: 13D02, 13H10. Secondary: 14B99}

\maketitle

\section*{Introduction}
Let $(R, \fm)$ be a local Cohen-Macaulay ring of dimension one and $I$ be an $\fm$-primary ideal of $R$. We say that $I$ is Elias if the Cohen-Macaulay types of $I$ and $R/I$ coincide. From standard facts, principal ideals or canonical ideals are Elias, and we will soon see that this property begets a rather rich and interesting theory.

Our work is heavily influenced by a nice result in \cite{Eli}, where Elias proves that any ideal $\omega$ that lies inside a high enough power of $\fm$ and such that $R/\omega$ is Gorenstein must be a canonical ideal. Although not stated explicitly there, the proof showed that any ideal that lies in a high enough power of $\fm$ is Elias, in our sense.  Another inspiration for the present work is \cite{DeS}, where De Stefani studies, in our language, powers of $\fm$ that are Elias in a Gorenstein local ring, and gives a  counter-example to a conjecture by Ding (see Section \ref{sec4} for the precise connection).  

In this note, we study Elias ideals in depth. They admit many different characterizations, and enjoy rather useful properties. For instance, they are {\bf closed under inclusion}, and principal or canonical ideals are Elias. On the other hand, conductor ideals or regular trace ideals are not Elias. When $R$ is Gorenstein, they are precisely ideals such that the Auslander index $\delta(R/I)$ is $1$. 

We are able to obtain many criteria to check whether an ideal is Elias, using very accessible information such as the minimal number or valuations of generators. Combining them immediately gives sharp bounds and  information on  conductor or canonical ideals, which can be tricky to obtain otherwise.  

There are several obvious ways to extend the present definitions and results to higher dimension rings or to modules. However, we choose to focus on the ideals in dimension one case here as they are already interesting enough, and also to keep the paper short.  We hope to address the more general theory in future works. 

We now describe briefly the structure and key results of the paper.

\begin{itemize}
\item In section \ref{sec1} we give the formal definition of Elias ideals and prove several key results. Theorem \ref{thmEli} contains several equivalent characterizations of Elias ideals. Corollary \ref{cor1} collects important consequences, for instance that Elias ideals are closed under ideal containment. Also, criteria for Elias ideals using colon ideals are given. Next, Proposition \ref{changerings} establishes the fundamental change of rings result that are used frequently in the sequence. 
\item Section \ref{sec2} connects Elias ideals to several well-studied class of ideals: Ulrich ideals, $\fm$-full ideals, full ideals, integrally closed ideals, etc. After some basic observations, (\ref{propUlrich}, \ref{prop2}, \ref{propmfull}), we give Theorems \ref{Elimu} and Proposition \ref{UElias}, which contain concrete ways to recognize Elias ideals using basic information such as number of generators or valuations. We also derive that conductor ideals or regular trace ideals are not Elias (Corollary \ref{cor_trace}). This indicates one of the useful application: if we know, for instance, that $\fm^2$ is Elias, then the conductor or any regular trace ideal must contains an element of $\fm$-adic order $1$. 

\item Given the previous section, it is natural to  study the Elias index $\eli(R)$, namely the first power of $\fm$ that is Elias, and we do so in Section \ref{sec3}. The first main result here is Theorem \ref{indexthm1}, connecting this index to the generalized L\"oewy length and the regularity of the associated graded ring. Next, in Theorem \ref{indexthm2}, we characterize rings with small indexes: $\eli(R)=1$ if and only if $R$ is regular, and $\eli(R)=2$ plus $R$ is Gorenstein is equivalent to $e(R)=2$. We give a large class of non-Gorenstein rings with Elias index $2$ (\ref{ind2_ex}). 

\item Lastly, in Section \ref{sec4} we focus on the special case of Gorenstein rings. In such situation, we observe that Elias ideals are precisely ones whose quotient has Auslander $\delta$-invariant one. This immediately allows us to apply what we have to recover old results about the Auslander invariant and Auslander index in  \ref{GorThm} and \ref{GorCor}. We give a counter-example to a Theorem by Ding and also revisit a counter-example to a conjecture by Ding given in \cite{DeS} (Examples \ref{Ding_ex} and \ref{DeS_ex}). 
\end{itemize}

\noindent\textbf{Acknowledgements}: It is a pleasure to thank Juan Elias and Alessandro Di Stefani for helpful comments and encouragements. The author is partially supported by the Simons Collaboration Grant FND0077558. 

\section{Elias ideals: definitions and basic results}\label{sec1} 

Throughout the paper, let $(R,\fm, k)$ be Cohen-Macaulay local ring of dimension one. For a module $M$, set $\type_R(M) = \dim_k\Ext^{\dim M}_R(k,M)$. Set $Q=Q(R)$  to be the total ring of fractions of $R$. Set $e=e(R)$, the Hilbert-Samuel multiplicity of $R$. For an element $x\in R$, the $\fm$-adic order of $x$, denoted $\ord(x)$ is the smallest $a$ such that $x\in \fm^a$. The order of an ideal $I$, denoted $\ord(I)$, is the minimum order of its elements.

\begin{definition}
We say that a $\fm$-primary ideal $I$ is an  Elias ideal if it satisfies  $\type(I) = \type(R/I)$. 
\end{definition}

\begin{thm}\label{thmEli}
We always have $\type(I)\geq \type(R/I)$. The following are equivalent. 

\begin{enumerate}
\item $\type(I) = \type(R/I)$.  
\item For any NZD $x \in \fm$, $xI:\fm\subseteq (x)$.
\item For any NZD $x \in \fm$, $xI:\fm = x(I:\fm)$.
\item For some NZD $x \in \fm$, $xI:\fm\subseteq (x)$.
\item  For some NZD $x \in \fm$, $xI:\fm = x(I:\fm)$.
\item $I:_Q\fm\subseteq R$. 
\item $K\subseteq \fm(K:_Q I)$ (assuming $R$ admits a canonical ideal $K$). 
\end{enumerate}

\end{thm}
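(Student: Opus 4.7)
My plan is to center everything on a single formula for $\type(I)$ derived from an $\Ext^1$ computation, from which both the inequality and conditions (1), (6) fall out, and then deduce the other equivalences from colon-ideal and canonical-duality manipulations.

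First, I would apply $\Hom_R(-, I)$ to the short exact sequence $0 \to \fm \to R \to k \to 0$. Since $I$ is $\fm$-primary over a Cohen--Macaulay ring of dimension one it has depth one, so $\Hom_R(k,I)=0$; combined with $\Ext^1_R(R,I)=0$, the long exact sequence collapses to
\[
0 \to I \to \Hom_R(\fm, I) \to \Ext^1_R(k, I) \to 0.
\]
Because $\fm$ and $I$ are rank-one torsion-free $R$-modules, every homomorphism $\fm \to I$ is multiplication by an element of $Q$, so $\Hom_R(\fm, I) = I:_Q \fm$. This yields the key formula
\[
\type(I) = \dim_k (I:_Q \fm)/I.
\]
Combined with $\type(R/I) = \dim_k(I:_R\fm)/I$ and $I:_R\fm = (I:_Q\fm)\cap R$, the inequality $\type(I) \geq \type(R/I)$ follows immediately, and equality holds precisely when $I:_Q\fm \subseteq R$, establishing $(1)\Leftrightarrow (6)$.

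For the colon-ideal conditions, the identity $(xI:_R\fm)\cap xR = x(I:_R\fm)$ -- obtained by canceling the NZD $x$ -- yields $(2)\Leftrightarrow(3)$ and $(4)\Leftrightarrow(5)$. For $(6)\Rightarrow(2)$, any $c\in xI:_R\fm$ satisfies $(c/x)\fm \subseteq I$, so $c/x\in I:_Q\fm\subseteq R$, forcing $c\in xR$. For $(4)\Rightarrow(6)$, given a NZD $x$ with $xI:_R\fm\subseteq xR$ and any $q\in I:_Q\fm$, observe that $qx\in q\fm\subseteq I\subseteq R$ and $qx\cdot\fm\subseteq xI$, so $qx\in xI:_R\fm\subseteq xR$ and hence $q\in R$.

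For $(6)\Leftrightarrow(7)$ I would use canonical-module duality. Setting $J := K:_Q I$ and using the identity $(A:_Q B):_Q C = A:_Q (BC)$ together with the biduality $K:_Q J = I$ (valid because $I$ and $J$ are reflexive fractional ideals in the presence of a canonical ideal), one computes
\[
I:_Q \fm = (K:_Q J):_Q \fm = K:_Q (\fm J).
\]
Since $R = K:_Q K$ and $\fm J$ is itself a reflexive fractional ideal, the containment $I:_Q\fm \subseteq R$ translates, via the order-reversing involution $K:_Q(-)$, into $K \subseteq \fm J = \fm(K:_Q I)$, which is (7). The only real obstacle is verifying the fractional-ideal identifications $\Hom_R(\fm, I) = I:_Q \fm$ and the biduality $K:_Q(K:_Q A) = A$ for rank-one torsion-free $A$; both are standard consequences of reflexivity of maximal Cohen--Macaulay modules over a one-dimensional Cohen--Macaulay local ring with canonical ideal, after which the rest is routine manipulation.
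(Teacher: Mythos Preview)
Your argument is correct and close in spirit to the paper's, but the organization differs in two places worth noting. First, for the basic inequality and the equivalence with the colon conditions, the paper works with $I/xI$ directly: it writes $\type(I)=\dim_k\bigl((xI:\fm)/xI\bigr)$ and compares this to $\dim_k\bigl(x(I:\fm)/xI\bigr)=\type(R/I)$, so that equality is literally condition~(3)/(5). Your route via $\Hom_R(-,I)$ applied to $0\to\fm\to R\to k\to 0$ lands instead on $\type(I)=\dim_k\bigl((I:_Q\fm)/I\bigr)$, making~(6) the natural pivot; since $xI:\fm=x(I:_Q\fm)$, the two formulas are the same up to multiplication by $x$, and the rest of (2)--(6) is identical bookkeeping. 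Second, and more substantively, for~(7) the paper dualizes $0\to I\to R\to R/I\to 0$ against $K$ to obtain $0\to K\to K:_QI\to\omega_{R/I}\to 0$ and reads off $\type(I)=\mu(K:_QI)$, $\type(R/I)=\mu(\omega_{R/I})$, so equality is exactly $K\subseteq\fm(K:_QI)$ by Nakayama. Your path through~(6), the identity $I:_Q\fm=K:_Q\bigl(\fm(K:_QI)\bigr)$, and the order-reversing involution $K:_Q(-)$ is valid but leans on canonical biduality for the fractional ideal $\fm(K:_QI)$; the paper's argument avoids that appeal and is a line shorter. Either way, the proof goes through.
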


\begin{proof}
Let $x$ be a NZD. Then $$\type(I) = \type(I/xI) = \dim_k\frac{xI:\fm}{xI} \geq \dim_k\frac{x(I:\fm)}{xI} =\dim_k\frac{I:\fm}{I}= \type(R/I)$$
Thus, $\type(I) = \type(R/I)$ if and only if $xI:\fm = x(I:\fm)$. Now, $xI:\fm \subseteq (x)$ is equivalent to  $xI:\fm =xJ$ for some ideal $J$, as $x$ is a NZD. Rewriting it as $xJ\fm\subseteq xI$, which is equivalent to $J\fm\subseteq I$, we get $J\subseteq I:\fm$. On the other hand $x(I:\fm)\subseteq xI:\fm$, thus $J=I:\fm$.   That establishes the equivalence of first five items. 

Note that for any NZD $x\in \fm$, $xI:\fm = x(I:_Q\fm)$. Thus, (6) is equivalent to (3). 

Let $K$ be a canonical ideal. Apply $\Hom_R(-, K)$ to the sequence $\ses{I}{R}{R/I}$, and indentifying $\Hom_R(I,K)$ with $K:_QI$, we get 
$\ses{K}{K:_Q I}{\Ext^1_R(R/I,K)= \omega_{R/I}}$. Since $\type(I) =\mu(K:_Q I)$ and $ \type(R/I) =\mu(\omega_{R/I})$, the equivalence of (7) and (1) follows.

\end{proof}

\begin{cor}\label{cor1}
We have:
\begin{enumerate}
\item If $I$ is isomorphic to $R$ or the canonical module of $R$ (assuming its existence), then $I$ is Elias.
\item If $I$ is Elias, then so is $J$ for any ideal $J\subseteq I$. (being  Elias is closed under inclusion) 
\item Let $K$ be a canonical ideal of $R$ and $I$ be an ideal containing $K$. Then $I$ is Elias if and only if $K\subseteq \fm(K:_RI)$. 
\item Let $K$ be a canonical ideal of $R$ and $I$ be an ideal such that $K\subseteq I$. Then $K:I$ is Elias  if and only if $K\subseteq \fm I$.
\item Suppose that $I$ contains a canonical ideal $K$ such that $\ord(K)=1$. Then $I$ is Elias if and only if $I=K$. 
\end{enumerate}
\end{cor}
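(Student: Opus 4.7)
The plan is to deduce all five items from the equivalent characterizations in \cref{thmEli}, especially (6) $I:_Q\fm\subseteq R$ and (7) $K\subseteq\fm(K:_Q I)$. For item (1), I handle the two cases separately. If $I\cong R$, then $I=(a)$ for a NZD $a\in\fm$; here $\type(I)=\type(R)$ because $I\cong R$, and the standard Ext-shifting isomorphism $\Ext^1_R(k,R)\cong\Hom_R(k,R/(a))$ (valid since $a$ is a NZD annihilating $k$) gives $\type(R/I)=\type(R)$ as well. If $I\cong K$, I invoke (7): since $\operatorname{End}_R(K)=R$, we have $K:_Q K=R$, so the condition becomes $K\subseteq\fm\cdot R=\fm$, which holds as $K$ is a proper $\fm$-primary ideal.

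Item (2) follows directly from (6): if $J\subseteq I$ then $J:_Q\fm\subseteq I:_Q\fm\subseteq R$. For item (3), the observation is that when $K\subseteq I$, any $q\in K:_Q I$ satisfies $qK\subseteq qI\subseteq K$, so $q\in K:_Q K=\operatorname{End}_R(K)=R$; hence $K:_Q I=K:_R I$, and (7) specializes to the stated criterion.

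For item (4), I first note that $K\subseteq K:_R I$ (because $KI\subseteq K$), so item (3) applies with $K:_R I$ in place of $I$: the latter is Elias iff $K\subseteq\fm(K:_R(K:_R I))$. The key input is the canonical reflexivity $K:_R(K:_R I)=I$ for $I\supseteq K$. I would derive this by applying $\Hom_R(-,K)$ twice: from $\ses{I}{R}{R/I}$ one reads off $(K:_R I)/K\cong\omega_{R/I}$, and from $\ses{K}{I}{I/K}$, using $\operatorname{End}_R(K)=R$, the vanishing $\Ext^1_R(I,K)=0$ (since $I$ is MCM) and $\Hom_R(I/K,K)=0$ (since $I/K$ has finite length and $\depth K=1$), one reads off $R/(K:_R I)\cong\Ext^1_R(I/K,K)$; iterating these two computations reproduces $R/I$ from $R/(K:_R(K:_R I))$.

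For item (5), if $I=K$ the claim is already part of (1). Otherwise $I\supsetneq K$, so $K:_R I\subsetneq K:_R K=R$ is a proper ideal, hence $K:_R I\subseteq\fm$ and $\fm(K:_R I)\subseteq\fm^2$. The hypothesis $\ord(K)=1$ says $K\not\subseteq\fm^2$, so the criterion in (3) fails and $I$ is not Elias. The main obstacle in the whole corollary is the double-duality identity $K:_R(K:_R I)=I$ used in item (4); the remaining items are essentially formal manipulations of the characterizations established in \cref{thmEli}.
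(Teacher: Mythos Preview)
Your proposal is correct and for items (2)--(5) follows essentially the same route as the paper, including the key reduction $K:_QI=K:_RI$ in (3) and the double-duality $K:_R(K:_RI)=I$ in (4) (which the paper simply asserts, while you spell out the Ext argument). The only notable divergence is in item (1): the paper handles both cases at once via characterization (6), observing that $I:_Q\fm\subseteq I:_QI=R$ since $\operatorname{End}_R(I)=R$ whenever $I\cong R$ or $I\cong K$; this is shorter than your separate type computation for the principal case, though your argument is also valid.
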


\begin{proof}
For the first claim, $I:_Q\fm\subset I:_QI=R$. For the second claim, we have $J:_Q\fm\subset I:_Q\fm$. For (3), first note that $K:_QI \subset K:_QK=R$, so $K:_QI = K:_RI$, and we can use part (7) of Theorem \ref{thmEli}. 

For part (4), note that $K:(K:I)=I$ hence we can apply part (3). 

For part (5), we again apply part (3): if $K\subsetneq I$, then $\fm(K:_RI)\subseteq \fm^2$, contradicting $\ord(K)=1$. 

\end{proof}

The following change of rings result would be used frequently in what follows. 

\begin{prop}\label{changerings}
Let $(R,\fm)\to (S,\fn)$ be a local,  flat rings extension such that $\dim S=1$ and $S$ is Noetherian. Then $I$ is an Elias ideal of $R$ if and only if $IS$ is an Elias ideal of $S$.

\end{prop}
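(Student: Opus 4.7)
The strategy is to compare types directly via the flat base change. Set $t' := \type_S(S/\fm S)$, $k = R/\fm$, and $k' = S/\fn$. I claim that
\[ \type_S(IS) = \type_R(I) \cdot t' \qquad \text{and} \qquad \type_S(S/IS) = \type_R(R/I) \cdot t'. \]
Since $t' \geq 1$, these two identities together with characterization $(1)$ of Theorem \ref{thmEli} immediately yield the desired equivalence.

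First I would dispose of the setup. Because $R\to S$ is flat local with $\dim R = \dim S = 1$, the closed fiber $S/\fm S$ has dimension $0$; so $\fm S$ is $\fn$-primary, hence $IS$ is $\fn$-primary. Standard flat descent also gives that $S$ is Cohen-Macaulay of dimension $1$. Choose $x\in\fm$ a NZD of $R$ (which exists by prime avoidance since $R$ is CM of dimension $1$); flatness makes $x$ a NZD on each of $R$, $S$, $I$, $IS$.

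The heart of the argument is a socle computation that establishes the first identity. Write $S/xIS = (R/xI)\otimes_R S$. The submodule of elements annihilated by $\fm S$ is
\[ \Hom_R(R/\fm,\, R/xI)\otimes_R S \;=\; \soc_R(R/xI)\otimes_R S \;\cong\; (S/\fm S)^{\type_R(I)}, \]
using that $\Hom_R(R/\fm,-)$ commutes with $-\otimes_R S$ (because $R/\fm$ is finitely presented over the Noetherian ring $R$) and that $\soc_R(R/xI) = (xI:_R\fm)/xI$ has $k$-dimension $\type_R(I)$. Every element of $\soc_S(S/xIS)$ is killed by $\fn \supseteq \fm S$, hence lies in this submodule, so
\[ \soc_S(S/xIS) \;=\; \soc_S\bigl((S/\fm S)^{\type_R(I)}\bigr) \;=\; (k')^{\type_R(I)\cdot t'}, \]
giving $\type_S(IS) = \type_R(I)\cdot t'$. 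The second identity follows by the same argument applied directly to $S/IS = (R/I)\otimes_R S$, with no need to mod out a NZD since $R/I$ is already Artinian.

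The main technical subtlety is the commutation of $\Hom_R(R/\fm,-)$ with the flat base change $-\otimes_R S$; this is where the finite presentation of $R/\fm$ is essential. Once that standard fact is in hand, the rest is bookkeeping. As an alternative route, one could invoke the base change formula for Bass numbers under a flat local extension, which in our CM/Artinian setting specializes to precisely the two identities above.
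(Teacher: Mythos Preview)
Your proof is correct and follows the same approach as the paper: both arguments rest on the multiplicativity of type under flat local base change, $\type_S(M\otimes_R S)=\type_R(M)\cdot\type_{S/\fm S}(S/\fm S)$, applied to $M=I$ and $M=R/I$. The only difference is presentation---the paper simply cites Foxby--Thorup \cite{FT} for this identity, whereas you reprove it in the cases at hand via the socle computation (and then note the citation route yourself at the end).
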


\begin{proof}
Under the assumption we have $\type_R(M)\type_{S/\fm S}(S/\fm S)= \type_S(M\otimes_R S)$ for any finitely generated $R$-module $M$ (see for instance \cite{FT}), thus the result follows. 
\end{proof}

\section{Elias ideals and other special ideals}\label{sec2}

\begin{definition}
Let $I$ be an $\fm$-primary ideal. 
\begin{itemize}
\item $I$ is called Ulrich (as an $R$-module) if  $\mu(I)=e(R)$. Assuming $k$ is infinite, then $I$ is Ulrich if and only if $xI=\fm I$ for some $x\in \fm$ (equivalently, for any $x\in \fm$ such that $\ell(R/xR)=e(R)$).  
\item $I$ is called $\fm$-full if $I\fm:x=I$ for some $x\in \fm$. 
\item $I$ is called full (or basically full) if $I\fm:\fm=I$. 
\end{itemize}
\end{definition}

\begin{remark}
When the definition of special ideals such as Ulrich or $\fm$-full ones involves an element $x$, we say that the property is witnessed by $x$. Note that being such $x$ is a Zariski-open condition (for the image of $x$ in the vector space $\fm/\fm^2$). For more on these ideals, see \cite{dms, HS, HRR, W}. 
\end{remark}

\begin{prop}\label{propUlrich}
Let $I$ be an $\fm$-primary ideal. Let $e$ be the Hilbert-Samuel multiplicity of $R$. The following are equivalent.
\begin{enumerate}
\item  $I$ is Ulrich.
\item  $\type(I)=e$. 
\end{enumerate}

\end{prop}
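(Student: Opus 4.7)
The plan is, after reducing to the case of an infinite residue field, to realize both $\mu(I)$ and $\type(I)$ as $k$-dimensions of natural subquotients of $I/xI$ for a superficial element $x\in\fm$, and then to read the equivalence off from a single length calculation.

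First I would reduce to $k$ infinite via the faithfully flat extension $R\to R(X):=R[X]_{\fm R[X]}$. Its maximal ideal is $\fm R(X)$ with residue field $k(X)$, so $\mu$ and $e$ are both preserved; and the formula $\type_R(M)\cdot\type_{S/\fm S}(S/\fm S)=\type_S(M\otimes_R S)$ appearing in the proof of Proposition \ref{changerings} specializes here to $\type_R(I)=\type_{R(X)}(IR(X))$, because $k(X)$ is a field. Hence both (1) and (2) are preserved, and we may assume $k$ is infinite.

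Now pick $x\in\fm$ superficial, so $x$ is a NZD with $\ell(R/xR)=e$. Since $I$ is $\fm$-primary in a one-dimensional Cohen--Macaulay ring, $I$ is a rank-one Cohen--Macaulay $R$-module, and the associativity formula gives
\[
\ell(I/xI)=e(x,I)=\rank(I)\cdot e(x,R)=e.
\]
By the computation inside the proof of Theorem \ref{thmEli}, $\soc(I/xI)=(xI:\fm)/xI$ has $k$-dimension $\type(I)$, while $\mu(I)=\dim_k I/\fm I=e-\ell(\fm I/xI)$ via the exact sequence $0\to\fm I/xI\to I/xI\to I/\fm I\to 0$. In particular $\mu(I)=e$ iff $\fm I=xI$; and $\type(I)=e$ iff the socle submodule of $I/xI$ has length equal to $\ell(I/xI)=e$, iff $\soc(I/xI)=I/xI$, iff $\fm\cdot(I/xI)=0$, iff $\fm I=xI$. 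Comparing these two chains of equivalences gives (1) $\Leftrightarrow$ (2).

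The main (rather mild) obstacle is justifying the reduction to an infinite residue field so that a superficial element is available; Proposition \ref{changerings} together with the type formula in its proof handles this cleanly, after which the argument is straightforward length bookkeeping in the length-$e$ module $I/xI$.
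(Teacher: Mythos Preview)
Your proof is correct and follows essentially the same route as the paper: reduce to infinite residue field, pick a minimal reduction $x$ so that $\ell(I/xI)=e$, and observe that $\type(I)=\ell(\soc(I/xI))$ equals $e$ precisely when $\fm(I/xI)=0$, i.e.\ $\fm I=xI$. The only cosmetic differences are that you use the extension $R(X)=R[X]_{\fm R[X]}$ (which keeps dimension one and fits Proposition~\ref{changerings} exactly) and that you spell out the parallel computation $\mu(I)=e-\ell(\fm I/xI)$ to close the loop, whereas the paper simply invokes the definition of Ulrich at the end.
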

\begin{proof}
We can assume $k$ is infinite by making the flat extension $R\to R[t]_{(\fm,t)}$. Let $x\in \fm$ be such that $\ell(R/xR)=e$. Then $\ell(I/xI)=e$. Note that $\type(I)= \ell(\soc(I/xI)) \leq \ell(I/xI)=e$, and equality happens precisely when $\fm(I/xI)=0$, in other words, $I$ is Ulrich. 
\end{proof}

\begin{prop}\label{prop2}
Let $I$ be an $\fm$-primary ideal. 
\begin{enumerate}
\item Suppose $k$ is infinite. If $I$ is Ulrich, then it is $\fm$-full. 
\item Suppose $k$ is infinite. If $I$ is integrally closed, then it is $\fm$-full.  
\item If $I$ is $\fm$-full, then it is full. 
\end{enumerate}

\end{prop}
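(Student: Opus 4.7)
The plan is to dispatch the three implications essentially independently: (1) and (3) will follow almost immediately from the definitions, while (2) is the substantive step and relies on a principal-reduction argument combined with the determinant trick.

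For (1), the hypothesis that $k$ is infinite lets me rewrite the Ulrich condition as $\fm I = xI$ for some NZD $x \in \fm$, as recorded in the definition just above. Then $I\fm : x = xI : x = I$, the last equality using that $x$ is a non-zero-divisor, so this same $x$ witnesses $\fm$-fullness of $I$. Part (3) is a one-liner: if $I\fm : x = I$ for some $x \in \fm$, then from $x \in \fm$ I get $I \subseteq I\fm : \fm \subseteq I\fm : x = I$, forcing equality throughout.

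For (2), I invoke the existence of a principal reduction of $\fm$: since $R$ is one-dimensional Cohen--Macaulay with infinite residue field, there is a NZD $x \in \fm$ and an integer $n \geq 0$ with $\fm^{n+1} = x\fm^n$. Take $a \in I\fm : x$, so that $ax \in I\fm$; multiplying by $\fm^n$ gives $ax\fm^n \subseteq I\fm^{n+1} = xI\fm^n$, and cancelling the NZD $x$ yields $a\fm^n \subseteq I\fm^n$. Since $\fm^n$ is a finitely generated faithful $R$-module (any annihilator would kill a power of a NZD in $\fm$), the determinant trick produces a monic polynomial relation showing $a$ is integral over $I$, and since $I$ is integrally closed we conclude $a \in I$; the reverse inclusion being trivial, $I\fm : x = I$ and $I$ is $\fm$-full. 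The main obstacle is picking the right $x$: one needs an element whose multiplication on $I\fm$ already encodes integral dependence of the multiplier over $I$, and a principal reduction of $\fm$ is exactly what is required. Once it is in hand, the remainder is the standard Cayley--Hamilton/determinant trick, and the other two parts reduce to bookkeeping with colons.
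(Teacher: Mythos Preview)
Your arguments for (1) and (3) are essentially identical to those in the paper. For (2), the paper simply cites \cite[Theorem 2.4]{Goto}, whereas you supply a self-contained proof tailored to the one-dimensional setting: choosing a principal reduction $x$ of $\fm$, pushing $ax\in I\fm$ up to $ax\fm^n\subseteq I\fm^{n+1}=xI\fm^n$, cancelling $x$, and then applying the determinant trick to the faithful module $\fm^n$ to conclude $a\in\bar I=I$. This is correct and is in fact the natural elementary argument in dimension one. The citation buys generality (Goto's result does not require $\dim R=1$), while your route is self-contained and makes the mechanism transparent; in the context of this paper, where everything is one-dimensional, your proof is entirely adequate.
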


\begin{proof}
(1): We can find a NZD $x$ such that $Ix=I\fm$, so $I\fm:x = Ix:x=I$. \\
(2): see \cite[Theorem 2.4]{Goto}. \\
(3): We have $I\subseteq I\fm:\fm\subseteq I\fm:x$, from which the assertion is clear.

\end{proof}

\begin{prop}\label{propmfull}
If $I$ is $\fm$-full, witnessed by a NZD $x\in \fm$. The following are equivalent:
\begin{enumerate}
\item $I$ is Elias. 
\item $I=xJ$ for some  Ulrich ideal $J$. 
\end{enumerate}
\end{prop}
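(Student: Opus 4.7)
The plan is to work with the characterization from Theorem \ref{thmEli}(6) (Elias $\iff$ $I :_Q \fm \subseteq R$) and to exploit the reformulation of the $\fm$-full condition as $\fm I \cap xR = xI$. A small reusable observation is that whenever an $R$-submodule $M \subseteq Q$ satisfies $\fm M = xM$, one has $M :_Q \fm = x^{-1}M$; both containments come from one-line computations in $Q$ (using $x \in \fm$ for one, and $m\fm \subseteq \fm M = xM$ for the other).

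For $(2) \Rightarrow (1)$, I would start from $I = xJ$ with $J$ Ulrich and first upgrade $J$ to being Ulrich at $x$. Since $\fm I = x\fm J$ and $x$ is a NZD, $\fm I :_R x = \fm J$; but the $\fm$-full assumption forces $\fm I :_R x = I = xJ$, giving $\fm J = xJ$. The reusable observation then yields $J :_Q \fm = x^{-1}J$ and consequently $I :_Q \fm = x(J :_Q \fm) = J \subseteq R$, which by Theorem \ref{thmEli}(6) is Elias.

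For $(1) \Rightarrow (2)$, the heart of the matter is to prove $\fm I = xI$. Granted that, the reusable observation again gives $I :_Q \fm = x^{-1}I$; Elias forces $x^{-1}I \subseteq R$, so $I \subseteq xR$ and $J := x^{-1}I \subseteq R$ is an ideal with $I = xJ$. Multiplication by $x$ identifies $J \cong I$, and $\mu(J) = \mu(I) = e$ (since $\fm I = xI$ makes $I/xI = I/\fm I$ into a $k$-vector space of length $e$), so $J$ is Ulrich. To prove $\fm I = xI$, set $J_0 := I :_R \fm$ and $N := \fm I/xI \subseteq I/xI$. By Theorem \ref{thmEli}(3), Elias identifies $\soc(I/xI) = (xI :_R \fm)/xI$ with $xJ_0/xI$, while $\fm$-full gives $\fm I \cap xR = xI$. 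Since $xJ_0 \subseteq xR$, I get $\fm I \cap xJ_0 \subseteq xI$, i.e.\ $N \cap \soc(I/xI) = 0$. Because the socle of a submodule is its intersection with the ambient socle, $\soc_R N = 0$; as $N$ has finite length (bounded by $\ell(I/xI) = e$), any nonzero such module over the local ring $(R, \fm)$ would have nonzero socle, forcing $N = 0$.

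I expect the main obstacle to be isolating this socle-transversality argument: Elias contributes the precise shape of $\soc(I/xI)$, and $\fm$-full contributes the transversality $\fm I \cap xR = xI$; only their joint use forces $\soc_R N = 0$ and hence $\fm I = xI$ via the Artinian socle principle. Without combining both hypotheses in this way, the key equality $\fm I = xI$ does not fall out of element chases, and the rest of the equivalence depends on it.
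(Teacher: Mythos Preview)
Your proof is correct, and it shares the two essential ingredients with the paper's argument—the rewriting of $\fm$-fullness as $\fm I\cap xR=xI$ and the Elias control of $xI:_R\fm$—but assembles them differently. For $(1)\Rightarrow(2)$ the paper works element-wise: it picks $s\in I\setminus(x)$ whose image lies in the socle of $R/(x)$, uses $\fm$-fullness to get $s\fm\subseteq I\fm\cap(x)=xI$, and then applies Elias in the form $xI:\fm\subseteq(x)$ to obtain the contradiction $s\in(x)$; this gives $I\subseteq(x)$ directly, and then $\fm$-fullness yields $\fm J=xJ$ for $J=x^{-1}I$. You instead first prove $\fm I=xI$ by the socle-transversality argument inside $I/xI$, and only afterwards deduce $I\subseteq(x)$ via $I:_Q\fm=x^{-1}I\subseteq R$. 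Your route is a step longer but packages the argument module-theoretically through the ``reusable observation'' $M:_Q\fm=x^{-1}M$, which makes both directions uniform. For $(2)\Rightarrow(1)$ your derivation of $\fm J=xJ$ straight from $\fm I:x=I=xJ$ is actually cleaner than the paper's, which writes $xI:\fm=\fm I:\fm$ and thus tacitly uses $xI=\fm I$; your computation supplies exactly that missing equality. One small point to tighten: the parenthetical ``$\ell(I/xI)=e$'' and the conclusion $\mu(J)=e$ require $x$ to be a minimal reduction of $\fm$, which is not part of the hypothesis; it does follow \emph{a posteriori} from $\fm I=xI$ (the determinant trick gives $\fm\subseteq\overline{(x)}$, hence $\ell(R/xR)=e$), but you should say so.
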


\begin{proof}
Assume $I$ is Elias, witnessed by a NZD $x$, so $I\fm:x=I$. We will show that $I\subseteq (x)$. If not, then $I$ contains an element $s$ whose image in $R/(x)$ is in the socle. Thus $s\fm\subset I\fm\cap(x) = x(I\fm:x)=xI$, so $s\in xI:\fm\subset (x)$, a contradiction. 

Since $I\subseteq (x)$ we must have $I=xJ$ for some $J$. We have $Jx=I= I\fm:x=Jx\fm:x=J\fm$, so $J$ is Ulrich. 

Assume (2). Then $I$ is Ulrich and also full by \ref{prop2}, so  $xI:\fm = \fm I:\fm=I=xJ\subset (x)$, thus $I$ is Elias. 
\end{proof}



\begin{cor}
If $e=2$ and $k$ is infinite, then $I$ is Elias if and only if $I\subseteq (x)$ for some NZD $x\in \fm$. 
\end{cor}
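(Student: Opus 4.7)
The plan is to establish the two directions separately. The backward direction turns out to be essentially formal and does not require $e = 2$, while the forward direction depends on that hypothesis in a crucial way.

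For the backward direction, I would verify condition (6) of Theorem \ref{thmEli}, namely $I :_Q \fm \subseteq R$. Suppose $I \subseteq (x)$ with $x \in \fm$ a NZD, and pick any $q \in I :_Q \fm$. Then $q\fm \subseteq I \subseteq xR$, so $q/x$ lies in $Q$ and satisfies $(q/x)\fm \subseteq R$, i.e., $q/x \in R :_Q \fm$. Consequently $q = x \cdot (q/x) \in \fm \cdot (R :_Q \fm) \subseteq R$, by the very definition of $R :_Q \fm$. This shows $I :_Q \fm \subseteq R$, hence $I$ is Elias.

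For the forward direction, the key preliminary step is the bound $\mu(I) \le e = 2$, which forces a convenient dichotomy. Since $k$ is infinite, I would choose a NZD $y \in \fm$ that is a principal reduction of $\fm$, so $\ell(R/yR) = e$; because $I$ is $\fm$-primary and hence a rank-one Cohen-Macaulay $R$-module, associativity of multiplicity gives $\ell(I/yI) = e \cdot \rank(I) = 2$. Since $yI \subseteq \fm I$, we get $\mu(I) = \dim_k I/\fm I \le \ell(I/yI) = 2$. If $\mu(I) = 1$, then $I$ is principal, so $I = (x)$ with $x \in \fm$ a NZD, and $I \subseteq (x)$ trivially. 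If $\mu(I) = 2 = e$, then $I$ is Ulrich; by Proposition \ref{prop2}(1), $I$ is $\fm$-full witnessed by the Ulrich witness $x$, and Proposition \ref{propmfull} applied to the Elias, $\fm$-full ideal $I$ yields $I = xJ$ for some Ulrich ideal $J$, whence $I \subseteq (x)$.

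The main obstacle is really the multiplicity inequality $\mu(I) \le e$; once that is in hand, the principal case is trivial and the Ulrich case chains directly through Propositions \ref{prop2} and \ref{propmfull}. The hypothesis $e = 2$ enters precisely to make the ``principal or Ulrich'' dichotomy exhaustive; for larger $e$ one could have Elias ideals with $1 < \mu(I) < e$, in which case neither branch of the argument applies and a different approach would be required.
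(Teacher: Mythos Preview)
Your proof is correct and follows essentially the same route as the paper: the forward direction uses the dichotomy ``principal or Ulrich'' (which you justify via $\mu(I)\le e$) together with Propositions \ref{prop2} and \ref{propmfull}, exactly as the paper does. For the backward direction the paper implicitly relies on Corollary \ref{cor1}(1)(2) (principal ideals are Elias and Elias is closed under inclusion), whereas you verify condition (6) of Theorem \ref{thmEli} directly; this is just an unpacking of the same content.
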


\begin{proof}
Since $e=2$, any ideal is either principal or Ulrich, and \ref{prop2} together with \ref{propmfull} give what we want. 
\end{proof}

\begin{thm}\label{Elimu} The following hold for an $\fm$ primary ideal $I$.
\begin{enumerate}
\item If $\mu(I)<e$ and $\type(R/I)\geq e-1$, then $I$ is Elias. 
\item Assume $\mu(\fm I)\leq \mu(I)=e-1$. Then $I\fm$ is Elias and $I\fm:\fm=I$. 
\item Furthermore, assume $R=S/(f)$ is a hypersurface, here $S$ is a regular local ring of dimension $2$. Let $J$ be an $S$ ideal minimally generated by $e$ elements, one of them is $f$. Then $JR$ is Elias. 
\end{enumerate}

\end{thm}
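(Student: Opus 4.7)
My plan is to prove (1) first and then cascade (2) and (3) from it. Part (1) combines two earlier facts: the baseline inequality $\type(I) \geq \type(R/I)$ at the start of Theorem \ref{thmEli}, and the ceiling $\type(I) \leq e$ that comes out of the proof of Proposition \ref{propUlrich} (with equality iff $I$ is Ulrich). Since $\mu(I) < e$ rules out $I$ being Ulrich, we get $\type(I) \leq e-1$, and the sandwich $\type(R/I) \leq \type(I) \leq e-1 \leq \type(R/I)$ forces $\type(I) = \type(R/I)$, so $I$ is Elias.

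For (2), I would apply (1) with $I$ replaced by $I\fm$. The hypothesis gives $\mu(I\fm) \leq e-1 < e$. For the type lower bound, the relation $\fm \cdot I \subseteq I\fm$ embeds the $k$-space $I/I\fm$ into $\soc(R/I\fm)$, so $\type(R/I\fm) \geq \mu(I) = e-1$, and (1) applies to show $I\fm$ is Elias. Combining the Elias equality $\type(I\fm) = \type(R/I\fm)$ with the ceiling $\type(I\fm) \leq e-1$ (since $\mu(I\fm) < e$) then pins down $\type(R/I\fm) = e-1$. So the inclusion $I/I\fm \subseteq \soc(R/I\fm) = (I\fm:\fm)/I\fm$ is an equality of $k$-spaces of the same dimension $e-1$, giving $I\fm : \fm \subseteq I$, with the reverse inclusion automatic.

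For (3), the generator count is $\mu_R(JR) = \mu_S(J) - 1 = e-1$ since $f$ lies in a minimal generating set of $J$. For the type, $R/JR \cong S/J$ is artinian, so $\type_R(R/JR) = \dim_k \soc(S/J) = \type_S(S/J)$, an invariant insensitive to whether we work over $S$ or $R$ (since $\fm$ and $\fn$ act identically on this module). As $J$ is $\fn$-primary in the regular local ring $S$ of dimension two, Hilbert--Burch gives the minimal free resolution $0 \to S^{e-1} \to S^e \to S \to S/J \to 0$, and dualizing (or using $\type = \mu(\omega_{S/J}) = \dim_k \Tor^S_2(k, S/J)$) yields $\type_S(S/J) = e-1$. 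Now (1) finishes the proof.

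The main obstacle I anticipate is the type computation in (3): cleanly identifying $\type_R(R/JR)$ with the top Betti number of $S/J$ over $S$, and invoking Hilbert--Burch to read off $\beta_2 = e-1$. Everything else is a direct cascade from (1), and (1) itself is a short sandwich argument once Proposition \ref{propUlrich} is in hand.
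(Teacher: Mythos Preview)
Your proposal is correct and follows essentially the same route as the paper's proof: part (1) is the sandwich argument from Theorem~\ref{thmEli} and Proposition~\ref{propUlrich}, and parts (2) and (3) cascade from (1) exactly as the paper does. The only difference is that in (3) the paper simply asserts $\mu_R(JR)=e-1$ and $\type(R/JR)=\type(S/J)=e-1$ without further comment, whereas you spell out the generator count and justify the type via Hilbert--Burch; this is extra detail, not a different approach.
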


\begin{proof}
By the inequality $\type(I)\geq \type(R/I)$, we must have $\type(I)$ is $e$ or $e-1$. But if $\type(I)=e$, then $\mu(I)=e$ by \ref{propUlrich}, contradiction.

Next, we have: $$\type(R/I\fm)=\dim_k \frac{I\fm:\fm}{I\fm}\geq \dim_k \frac{I}{I\fm} = \mu(I)\geq e-1$$ and $I\fm$ is not Ulrich by assumption. So $I\fm$ is Elias and $\type(I\fm)=e-1$, which by the chain above implies that $I\fm:\fm=I$.

For the last  part, let $I=JR$. Then $\mu_R(I)=e-1$  and $\type(R/I)=\type(S/J)=e-1$, and we can apply the first part. 

\end{proof}

\begin{example}\label{exm2}
Let $R=k[[t^4,t^5,t^{11}]]\cong k[[a,b,c]]/(a^4-bc, b^3-ac, c^2-a^3b^2)$. Then $\fm^2$ is Elias: one can check directly or note that $\mu(\fm)=\mu(\fm^2)=3=e(R)-1$ and use \ref{Elimu}. But $\fm^2$ is not contained in $(x)$ for any $(x)$. 
\end{example}

\begin{example}
Let $R=k[[t^6,t^7,t^{15}]]\cong k[[a,b,c]]/(a^5-c^2, b^3-ac)$. Then the Hilbert function is $\{1,3,4,5,5,6,\dots\}$, thus $\fm^4$ is Elias. In this case, $\fm^4 \subseteq (a)$, so $\fm^4$ is trivially Elias.   \end{example}

Let $R\subset S$ be a finite birational extension. We recall that the conductor of $S$ in $R$, denoted $c_R(S)$, is $R:_{Q(R)}S$. 

\begin{prop}\label{prop_bir}
Let $R\subset S$ be a finite birational extension. If $IS=I$ (i.e, $I$ is an $S$-module) and $I$ is Elias, then $I:\fm\subseteq c_R(S)$. 
\end{prop}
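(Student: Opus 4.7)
The plan is to use characterization (6) of Theorem \ref{thmEli}, namely that $I$ is Elias if and only if $I :_Q \fm \subseteq R$, and then to show that every element of $I : \fm$ multiplies $S$ back into $R$.

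First I would fix an arbitrary $y \in I : \fm$, so $y\fm \subseteq I$. The crux is to use the hypothesis $IS = I$ to amplify this containment: since $\fm \subseteq R \subseteq S$ and multiplication in $Q(R)$ is commutative,
$$ (yS)\,\fm \;=\; y\fm \cdot S \;\subseteq\; IS \;=\; I. $$
Hence every element of $yS$ lies in $I :_Q \fm$, and by \ref{thmEli}(6) we conclude $yS \subseteq I :_Q \fm \subseteq R$. But $yS \subseteq R$ is exactly the statement that $y \in R :_{Q(R)} S = c_R(S)$, which finishes the argument.

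There is essentially no obstacle once the right characterization of Elias ideals is in hand; the whole content of the proof is the observation that the $S$-stability of $I$ promotes the weaker condition $y\fm \subseteq I$ to the stronger $(yS)\fm \subseteq I$, after which the Elias hypothesis automatically forces $yS$ into $R$. If one had only the bare definition $\type(I) = \type(R/I)$ in mind, the step would look mysterious; phrased via $I :_Q \fm \subseteq R$, it is immediate.
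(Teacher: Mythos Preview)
Your proof is correct and follows essentially the same route as the paper: both invoke characterization (6) of Theorem~\ref{thmEli} and use $IS=I$ to show $(I:\fm)S\subseteq I:_Q\fm\subseteq R$, whence $I:\fm\subseteq R:_Q S=c_R(S)$. The only difference is cosmetic: you argue element-wise with a fixed $y\in I:\fm$, while the paper writes the same containments at the level of fractional ideals.
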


\begin{proof}
Let $Q=Q(R)$. We have $R\supset I:_Q\fm$ = $IS:_Q \fm S \supset (I:\fm)S$, so $I:\fm\subseteq R:_QS =c_R(S)$ as desired.  
\end{proof}

Note that if $IS=I$, then $\tr(I)\subseteq c_R(S)$. So naturally, one can ask to extend \ref{prop_bir} as follows:

\begin{question}
If $I$ is Elias, do we have $I:\fm\subseteq \tr(I)$?\\

The answer is no. In Example \ref{exm2} above, Let $R=k[[t^4,t^5,t^{11}]]\cong k[[a,b,c]]/(a^4-bc, b^3-ac, c^2-a^3b^2)$. One can check that $\tr(\fm^2)=(a^2,ab,b^2,c)$ while $\fm^2:\fm=\fm$. 
\end{question}

\begin{cor}
Suppose $\fm^2$ is Elias (e.g., if $R$ has minimal multiplicity) and is integrally closed. If $\fm^2\subseteq c_R(\overline R)$  then $\fm\subseteq c_R(\overline R)$. 
\end{cor}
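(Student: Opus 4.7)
The plan is to apply Proposition \ref{prop_bir} directly with $I = \fm^2$ and $S = \overline{R}$. The conclusion of that proposition, if we can invoke it, gives $\fm^2 : \fm \subseteq c_R(\overline{R})$, and since $\fm \cdot \fm \subseteq \fm^2$ trivially implies $\fm \subseteq \fm^2 : \fm$, we are done. So the whole argument reduces to checking that the hypothesis of \ref{prop_bir} is satisfied, namely that $\fm^2$ is an $\overline{R}$-submodule of $R$, i.e.\ $\fm^2 \overline{R} = \fm^2$.

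The main (and really only) step is to verify $\fm^2 \overline{R} = \fm^2$. The inclusion $\fm^2 \subseteq \fm^2 \overline{R}$ is clear. For the reverse, the hypothesis $\fm^2 \subseteq c_R(\overline{R})$ gives $\fm^2 \overline{R} \subseteq R$, so $\fm^2 \overline{R}$ is an honest ideal of $R$. I would then invoke the standard fact that if $S \supseteq R$ is integral and $J$ is an ideal of $R$, then $JS \cap R \subseteq \overline{J}$ (the integral closure of $J$ in $R$): any $x = \sum b_i s_i$ with $b_i \in J$, $s_i \in S$ lies in $J S'$ where $S' = R[s_1,\dots,s_n]$ is a finite $R$-module, so the determinant trick shows $x$ satisfies a monic equation $x^n + a_1 x^{n-1} + \cdots + a_n = 0$ with $a_i \in J^i$. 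Applying this with $J = \fm^2$ and $S = \overline{R}$ yields $\fm^2 \overline{R} \subseteq \overline{\fm^2}$, and the assumption that $\fm^2$ is integrally closed collapses this to $\fm^2 \overline{R} \subseteq \fm^2$.

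With this in hand, \ref{prop_bir} applies to $I = \fm^2$ and gives $\fm^2 : \fm \subseteq c_R(\overline{R})$; combined with $\fm \subseteq \fm^2 : \fm$ we conclude $\fm \subseteq c_R(\overline{R})$. The only genuine obstacle is recalling the integral-dependence-of-ideals lemma used in the middle step; everything else is a direct chain of inclusions.
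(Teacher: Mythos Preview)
Your proof is correct and follows exactly the approach the paper intends: apply Proposition \ref{prop_bir} with $I=\fm^2$ and $S=\overline{R}$. The paper's one-line proof (``Apply \ref{prop_bir} to $I=\fm^2$'') leaves implicit the verification that $\fm^2\overline{R}=\fm^2$, which you correctly supply using the two hypotheses $\fm^2\subseteq c_R(\overline{R})$ and $\overline{\fm^2}=\fm^2$ together with the standard contraction-of-extension fact for integral extensions.
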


\begin{proof}
Apply \ref{prop_bir} to $I=\fm^2$. 
\end{proof}

\begin{cor}\label{cor_trace}
Assume that  the integral closure $\overline R$ is finite. Then the conductor  of $\overline R$ in $R$ is not Elias. A regular trace ideal is not Elias. 
\end{cor}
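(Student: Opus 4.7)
The plan is to deduce both statements from Proposition~\ref{prop_bir} by choosing, in each case, a finite birational extension $S$ of $R$ for which the given ideal $I$ equals the conductor $c_R(S)$. In both cases the interesting range is $I \subsetneq R$; otherwise the Elias hypothesis does not apply and the statement is vacuous.

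The conductor case is direct. With $I = c := c_R(\overline R)$ and $S := \overline R$, one has $c\overline R = c$ by definition of the conductor and $c_R(S) = c$ tautologically. If $c$ were Elias, Proposition~\ref{prop_bir} would force $c :_R \fm \subseteq c_R(\overline R) = c$; but $c \subsetneq R$ is $\fm$-primary, so the socle $(c :_R \fm)/c$ of $R/c$ is nonzero, contradiction.

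For a proper regular trace ideal $I$, I would take $S := \operatorname{End}_R(I) = I :_Q I$, a module-finite birational extension of $R$ inside $\overline R$, with $IS = I$. The essential input is the identity $I = c_R(S)$, standard in the theory of trace ideals (cf.\ Lindo). One direction, $I \subseteq c_R(S)$, is immediate from $IS \subseteq R$. The other uses the trace-ideal characterization $R :_Q I = \operatorname{End}_R(I) = S$ and the ensuing reflexivity $R :_Q (R :_Q I) = I$. Once $I = c_R(S)$ is in hand, Proposition~\ref{prop_bir} gives $I :_R \fm \subseteq c_R(S) = I$ under the Elias hypothesis, again contradicting the nontrivial socle of $R/I$.

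The main obstacle is the inclusion $c_R(\operatorname{End}_R(I)) \subseteq I$ for regular trace ideals; the opposite inclusion is automatic, but the reverse is where the trace hypothesis is genuinely used. Without it, Proposition~\ref{prop_bir} alone would yield only $I :_R \fm \subseteq c_R(S)$, which is insufficient by itself to preclude $I$ from being Elias.
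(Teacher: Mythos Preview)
Your argument for the conductor is exactly the paper's. For regular trace ideals, however, the paper takes a shorter route: it cites from \cite{dms} that every regular trace ideal $I$ contains the conductor $\mathfrak c$, and then invokes the closure of the Elias property under inclusion (Corollary~\ref{cor1}(2)): if $I$ were Elias, so would be $\mathfrak c\subseteq I$, contradicting the first part. Thus the trace-ideal case is reduced to the conductor case in one line, rather than run as a parallel argument.

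Your approach via $S=\operatorname{End}_R(I)$ and the identity $I=c_R(S)$ is also correct; that identity is precisely the reflexivity $R:_Q(R:_Q I)=I$ of a regular trace ideal, which is indeed known (and is essentially the bijection between regular trace ideals and finite birational extensions). The trade-off is that you are importing a slightly heavier external fact and not using the closure-under-inclusion already proved in the paper, whereas the paper's proof leverages Corollary~\ref{cor1} to make the second assertion an immediate consequence of the first. Functionally both arguments appeal to structural facts about trace ideals from the same circle of results; the paper's choice just keeps the corollary maximally self-contained relative to what has been developed.
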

\begin{proof}
Let $\mathfrak{c} = c_R(\overline R)$. Then $\mathfrak{c}$ is a $\overline R$-module, so if it is Elias we would have $\mathfrak{c}:\fm\subseteq \mathfrak{c}$, absurd! Any regular trace ideal must contain $\mathfrak{c}$, see for instance \cite{dms}, so it can not be Elias either by \ref{cor1}. 
\end{proof}

The following is simple but quite useful for constructing Elias ideals from minimal generators of Ulrich ideals. See the examples that follow. 

\begin{prop}\label{UElias}
Let $I\subset J$ be regular ideals with $J$ Ulrich. Let $x\in \fm$ be a minimal reduction of $\fm$.  Assume that $\fm y\not\subseteq xI$ for any minimal generator of $J$. Then $I$ is Elias. 
\end{prop}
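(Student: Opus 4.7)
The plan is to verify characterization (4) of Theorem \ref{thmEli}, i.e., that $xI:\fm\subseteq(x)$ for the given minimal reduction $x$. So I would pick an arbitrary $s\in xI:\fm$ and aim to prove $s\in(x)$.

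The first step is to locate $s$ inside $J$. Since $I\subseteq J$, $s\fm\subseteq xI\subseteq xJ$, so $s\in xJ:\fm$. Because $J$ is Ulrich and $x$ is a minimal reduction of $\fm$, we have $\fm J=xJ$; consequently $\fm J:x=xJ:x=J$ (using that $x$ is a NZD), which says $J$ is $\fm$-full witnessed by $x$, hence full by Proposition \ref{prop2}(3), i.e., $\fm J:\fm=J$. Combining these, $xJ:\fm=\fm J:\fm=J$, and therefore $s\in J$.

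The argument then closes by a clean dichotomy. Either $s\in xJ\subseteq(x)$, and we are done, or $s\notin xJ=\fm J$. In the latter case $s$ has nonzero image in $J/\fm J$, so $s$ extends to a minimal generating set of $J$ and is therefore a minimal generator. The standing hypothesis then forces $\fm s\not\subseteq xI$, directly contradicting $s\in xI:\fm$. Hence $s\in xJ\subseteq(x)$, as required. No step is especially delicate; the conceptual engine is that the Ulrich identity $\fm J=xJ$ plays two roles, letting us push $xJ:\fm$ back to $J$ (via fullness) and simultaneously identifying $J\setminus xJ$ with the set of minimal generators, so that the hypothesis fits exactly to yield the needed contradiction.
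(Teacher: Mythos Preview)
Your proof is correct and follows essentially the same route as the paper. The paper compresses the entire argument into the single line ``the assumption implies that $xI:\fm\subseteq \fm J = xJ\subset (x)$''; your proof simply unpacks why $xI:\fm\subseteq \fm J$ holds, via the fullness of $J$ (to get $xI:\fm\subseteq J$) and the minimal-generator dichotomy (to rule out $s\in J\setminus\fm J$), which is exactly the content hidden in that line.
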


\begin{proof}
The assumption implies that $xI:\fm\subseteq \fm J = xJ\subset (x)$. 
\end{proof}

\begin{example}
Let $R=k[[a_1,\dots, a_n]]/(a_ia_j)_{1\leq i< j\leq n}$. Apply \ref{UElias} with  $J=\fm, x=a_1+a_2+\dots +a_n$. Note that each element  $f\in \fm$ has the form $f= \sum \alpha_ia_i^{s_i}$ where $\alpha_i$s are units or $0$. Then $a_if= \alpha_i a_i^{s_i+1}$ and $xf = \sum  \alpha_ia_i^{s_i+1}$. It follows easily then that the condition $\fm y\not\subseteq xI$ for any minimal generator $y$ of $\fm$ is equivalent to $a_i^2\notin xI$ for each $i$, which is equivalent to $a_i\notin I$ for each $i$. 

For instance, if $R=\QQ[[a,b,c]]/(ab,bc,ca)$, $I=(a-b,b-c)$ is Elias. Since $R/I = \QQ[[a]]/(a^2)$ is Gorenstein, $I$ is a canonical ideal. 
\end{example}
One can use valuations to construct Elias ideals from part of a minimal generating set of some Ulrich ideal. 

\begin{example}
Let $R=k[[t^n, t^{n+1},\dots, t^{2n-1}]]$. Let $I= (t^n,\dots, t^{2n-2})$. Apply  \ref{UElias} with  $J=\fm, x=t^n$. Let $\nu$ be the $t$-adic  valuation on $R$.   Note that for any minimal generator of $y\in J=\fm$, $3n-2\in \nu(y\fm)$. On the other hand $3n-2\notin \nu(xI)$, so $y\fm \not\subseteq xI$. It follows that $I$, and any ideal contained in $I$, is Elias. Note that again, since $R/I$ is Gorenstein, $I$ is actually a canonical ideal. 
\end{example}

\section{Elias index}\label{sec3}

\begin{definition} One defines the following:
\begin{itemize}
 \item Let the Elias index of $R$, denoted by $\eli(R)$ be the smallest $s$ such that $\fm^s$ is Elias. 
 \item Let the generalized L\"oewy length of $R$, denoted by $\gll(R)$, be the infimum of $s$ such that $\fm^s\subseteq (x)$ for some $x\in \fm$. 
 \item Let the Ulrich index  of $R$, denoted by $\ulr(R)$ be the smallest $s$ such that $\fm^s$ is Ulrich, that is $\mu(\fm^s)=e$. 
  \end{itemize}
\end{definition}

\begin{thm}\label{indexthm1} We have:
\begin{enumerate}
\item $\eli(R)\leq \gll(R)$. 
\item  $\gll(R)\leq \ulr(R)+1$, if the residue field $k$ is infinite.
\item Suppose that the associated graded ring $\gr_{\fm}(R)$ is Cohen-Macaulay and the residue field $k$ is infinite. Then $\eli(R) = \gll(R) = \ulr(R)+1$. 
\end{enumerate}
\end{thm}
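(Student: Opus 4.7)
Parts (1) and (2) are direct consequences of earlier results. For (1), if $\fm^s \subseteq (x)$ for some $x \in \fm$, then $(x)$ is principal, hence Elias by Corollary \ref{cor1}(1), and then $\fm^s$ is Elias by the closure under containment in Corollary \ref{cor1}(2); this gives $\eli(R) \leq s = \gll(R)$. For (2), when $k$ is infinite, the Ulrich characterization recalled in Section~\ref{sec2} provides, for any Ulrich $\fm^s$, some $x \in \fm$ with $x\fm^s = \fm^{s+1}$, whence $\fm^{s+1} \subseteq (x)$ and $\gll(R) \leq s+1$.

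For (3), parts (1) and (2) combined already give $\eli(R) \leq \gll(R) \leq \ulr(R) + 1$, so the task reduces to proving the reverse inequality $\ulr(R) + 1 \leq \eli(R)$. Set $s = \eli(R)$. The strategy is to show that, under the hypotheses of (3), an Elias $\fm^s$ must satisfy $\fm^s \subseteq (x)$ for some superficial $x \in \fm$; the identity $\fm^s \cap (x) = x\fm^{s-1}$, valid under CM of $\gr_\fm(R)$, then forces $\fm^s = x\fm^{s-1}$, i.e.\ $\fm^{s-1}$ is Ulrich with respect to the minimal reduction $x$, and so $\ulr(R) \leq s - 1$.

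The computation runs as follows. Pick $x \in \fm$ superficial, so $x^*$ is a NZD on $\gr_\fm(R)$; this yields the standard colon identities $\fm^n \cap (x) = x\fm^{n-1}$ and $\fm^n : \fm = \fm^{n-1}$ for $n \geq 1$. Using $x\fm^s = \fm^{s+1} \cap (x)$, a direct check gives
\[
x\fm^s : \fm = (\fm^{s+1}:\fm) \cap ((x):\fm) = \fm^s \cap ((x):\fm).
\]
By Theorem \ref{thmEli}(2), $\fm^s$ being Elias then reads $\fm^s \cap ((x):\fm) \subseteq (x)$. Passing to the Artinian local ring $A := R/(x)$, this says precisely that the image $N \subseteq A$ of $\fm^s$ meets $\soc(A) = ((x):\fm)/(x)$ only in zero. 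Since every nonzero submodule of an Artinian local ring has nonzero socle, $N = 0$, i.e.\ $\fm^s \subseteq (x)$, as needed. The main subtlety is extracting the two colon identities from the CM hypothesis on $\gr_\fm(R)$ and then translating the Elias condition into the socle-intersection statement inside the Artinian local quotient $R/(x)$.
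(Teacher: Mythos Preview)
Your proof is correct and follows essentially the same route as the paper. The only difference is packaging: the paper observes that the CM hypothesis on $\gr_\fm(R)$ makes every $\fm^s$ an $\fm$-full ideal (this is your identity $\fm^{s+1}:x=\fm^s$) and then invokes Proposition~\ref{propmfull} as a black box, whereas you unpack that proposition's socle argument directly in the special case $I=\fm^s$.
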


\begin{proof}
If $\fm^s\subseteq (x)$ then $x$ must be a NZD. Thus $\fm^s$ is Elias by \ref{cor1}. The second statement follows from definition. The condition that $\gr_{\fm}(R)$ is Cohen-Macaulay implies that $\fm^s$ is $\fm$-full for all $s>0$, so the last assertion follows from \ref{propmfull}.
\end{proof}

\begin{thm}\label{indexthm2}
We have:
\begin{enumerate}
\item $\eli(R) =1$ if and only if $R$ is regular. 
\item Assume $R$ is Gorenstein, then $\eli(R)=2$ if and only if $e(R)=2$. 
\item Let $(A, \fn)$ be a Gorenstein local ring of dimension one. Suppose that $R=\fn:_{Q(A)}\fn$ is local. Then $\eli(R)\leq 2$. 
\end{enumerate}

\end{thm}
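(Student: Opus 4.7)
The plan is to treat the three parts in order, using the characterizations of Elias ideals from Theorem \ref{thmEli} and Corollary \ref{cor1}.

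For part (1), if $R$ is regular then $\fm$ is principal, hence isomorphic to $R$ and Elias by Corollary \ref{cor1}(1). For the converse direction, $\fm$ Elias gives $\fm:_Q\fm=R$ by Theorem \ref{thmEli}(6); if $R$ were not regular then $\fm$ would not be invertible, so $\fm\fm^{-1}$ would be a proper ideal of $R$, forcing $\fm^{-1}\subseteq \fm:_Q\fm=R$. This contradicts the fact that $\fm^{-1}/R\cong\Ext^1_R(k,R)$ has positive dimension $\type(R)\geq 1$, which one sees by applying $\Hom_R(-,R)$ to $0\to\fm\to R\to k\to 0$.

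For part (2), after passing to a flat extension via Proposition \ref{changerings} I would assume $k$ is infinite and fix a minimal reduction $x\in\fm$. The direction $e=2\Rightarrow\eli(R)=2$ is direct: $\fm$ is Ulrich (since $\mu(\fm)=2=e$), so $\fm^2=x\fm\subseteq(x)$ is Elias by Corollary \ref{cor1}, while $\fm$ itself is not Elias by part (1). For the converse, I would split on the reduction number $r$ of $\fm$ with respect to $x$. If $r=1$ then $\fm$ is Ulrich with $\type(\fm)=e$ by Proposition \ref{propUlrich}; Gorenstein reflexivity gives $\type(\fm)=\mu(\Hom_R(\fm,R))=\mu(\fm^{-1})$, and the short exact sequence $0\to R\to\fm^{-1}\to\Ext^1(k,R)=k\to 0$ forces $\mu(\fm^{-1})\leq 2$, hence $e\leq 2$ and thus $e=2$. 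If $r\geq 2$, I would derive a contradiction by finding $z\in\fm^r\setminus(x)$ with $z\fm\subseteq x\fm^2$, violating Theorem \ref{thmEli}(2). The key observation is that $\fm^r\not\subseteq(x)$: were $\fm^r=xJ$ for $J=\fm^r:_R x$, comparing the two expressions $\fm^{r+1}=\fm\cdot xJ=x\fm J$ and $\fm^{r+1}=x\cdot\fm^r=x^2J$ gives $\fm J=xJ$; since $\fm\subseteq J$, maximality of $\fm$ forces $J\in\{\fm,R\}$, each case contradicting either $r\geq 2$ (the case $J=\fm$ gives $r=1$) or $R$ non-regular (the case $J=R$ gives $\fm=xR$). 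Then for any $z\in\fm^r\setminus(x)$, $z\fm\subseteq\fm^{r+1}=x\fm^r\subseteq x\fm^2$ since $\fm^r\subseteq\fm^2$ when $r\geq 2$.

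For part (3), when $A$ is regular then $R=A$ and the result follows from part (1). Otherwise $\fn$ is not invertible in $A$, so $\fn\fn^{-1}\subseteq\fn$, which gives $\fn^{-1}\subseteq \fn:_Q\fn=R$; combined with $R\subseteq\fn^{-1}$ this yields $R=\fn^{-1}$ as $A$-modules. Then $\omega_R\cong\Hom_A(R,A)=(\fn^{-1})^{-1}=\fn$ by Gorenstein reflexivity of fractional ideals, and since $\fn R=\fn$ directly from the definition of $R$, $\fn$ is an $R$-ideal isomorphic to $\omega_R$, hence Elias by Corollary \ref{cor1}(1). The short exact sequence $0\to A/\fn\to R/\fn\to R/A\to 0$ of $A$-modules exhibits $R/\fn$ as having length $2$ (both outer terms are isomorphic to $k$); since $A$ and $R$ share residue field, this is also the $R$-length, so $\fm_R^2\cdot(R/\fn)=0$, giving $\fm_R^2\subseteq\fn$. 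By closure under inclusion (Corollary \ref{cor1}(2)), $\fm_R^2$ is Elias, so $\eli(R)\leq 2$. The main obstacle will be the $r\geq 2$ analysis in part (2), since it requires careful control of how powers of $\fm$ sit relative to the minimal reduction $x$.
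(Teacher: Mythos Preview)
Your arguments for parts (1) and (3) are correct and pleasantly different from the paper's. For (1) the paper goes via $\fm$-full ideals and Proposition~\ref{propmfull}, while you use the cleaner observation $\fm:_Q\fm=R$ together with $\fm^{-1}/R\cong\Ext^1_R(k,R)\neq 0$. For (3) your route is essentially the paper's, with one slip: the residue fields of $A$ and $R$ need \emph{not} coincide (the paper's own example $A=\RR[[t,it]]\subset R=\CC[[t]]$ shows this). What is true is $\ell_R(R/\fn)\leq\ell_A(R/\fn)=2$, which still yields $\fm_R^2\subseteq\fn$, so your conclusion survives.

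Part (2), however, has a genuine gap in the case $r\geq 3$. Your claim ``$\fm\subseteq J$'' for $J=\fm^r:_R x$ amounts to $x\fm\subseteq\fm^r$; this holds for $r=2$ but is unjustified (and generally false) for $r\geq 3$, so you cannot force $J\in\{\fm,R\}$. Worse, the assertion $\fm^r\not\subseteq(x)$ is actually \emph{false} under your running hypotheses: since $r$ is the reduction number, $\fm^r$ is Ulrich, hence $\fm$-full by Proposition~\ref{prop2}; and $\fm^r\subseteq\fm^2$ is Elias by closure under inclusion, so Proposition~\ref{propmfull} gives $\fm^r=xJ'\subseteq(x)$. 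Thus no element $z\in\fm^r\setminus(x)$ exists, and your contradiction evaporates. (For $r=2$ your argument is fine: there the two conclusions $\fm^2\subseteq(x)$ and $\fm^2\not\subseteq(x)$ collide, which \emph{is} the desired contradiction.)

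The paper repairs this with one extra idea you are missing: when $R$ is Gorenstein with $e\geq 3$, \emph{every} Ulrich ideal lies in $\fm^2$. Indeed, if $I$ is Ulrich then $I\fm=xI\subseteq(x)$ gives $I\subseteq(x):\fm$, and since $R/(x)$ is Artinian Gorenstein of length $e\geq 3$ its socle is contained in $\overline{\fm}^{\,2}$, so $(x):\fm\subseteq(x)+\fm^2$; varying $x$ over general minimal reductions forces $I\subseteq\fm^2$. With this in hand, choose an Ulrich ideal $I$ maximal under inclusion; then $I\subseteq\fm^2$ is Elias and $\fm$-full, so $I=xJ$ with $J$ Ulrich strictly larger, contradicting maximality. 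Your iteration instinct was right, but it needs this socle argument to get off the ground.
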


\begin{proof}
(1): Assume  $\fm$ is Elias. To show that $R$ is regular, we can make the extension $R\to R[t]_{(\fm,t)}$ and assume $k$ is infinite. Choose a NZD $x\in \fm-\fm^2$, we have $\fm^2:x=\fm$, that is $\fm$ is $\fm$-full witnessed by $x$. Then \ref{propmfull} shows that $\fm\subset (x)$, thus $\fm$ is principal. 

(2): We can assume again by \ref{changerings} that  $k$ is infinite. If $e=2$, then $\fm^2\subset (x)$ for a minimal reduction $x$ of $\fm$, thus $\fm^2$ is Elias. Now, suppose $\fm^2$ is Elias and $e\geq 3$, and we need a contradiction. We first claim that any Ulrich ideal $I$ of $R$ must lie in $\fm^2$. Take any  minimal reduction $x$ of $\fm$. Then $I\fm=xI\subseteq (x)$, so $I\subset (x):\fm\subseteq (x)+\fm^2$ (otherwise the socle of $R'= R/xR$ has order $1$, impossible as $R'$ is Gorenstein of length at least $3$). As $x$ is general, working inside the vector space $\fm/\fm^2$, we see that $I\subseteq \fm^2$.  

The set of $\fm$-primary Ulrich ideals in $R$ is not empty, as it contains high enough powers of $\fm$. Thus, we can pick an element $I$ in this set maximal with respect to inclusion. By the last claim, $I\subseteq \fm^2$, and hence $I$ is also Elias by \ref{cor1}. Now \ref{prop2} and \ref{propmfull} imply that $I=xJ$ for some NZD $x\in \fm$, so $J$ is an Ulrich ideal strictly containing $I$, and that's the contradiction we need.

(3): If $R=A$, then $\fn$ is Elias by \ref{thmEli}, hence $A$ is regular by part (1). Thus $R$ is also regular, and $\eli(R)=1$. If $R$ strictly contains $A$, then $c_A(R) = A:_{Q(A)}R= \fn$, hence $\fn \cong \Hom_A(R,A) \cong \omega_R$. So $\fn$ is a canonical ideal of $R$. On the other hand,  as $A$ is not regular, $\mu_A(R)=2$ (dualize the exact sequence $\ses{\fn}{A}{A/\fn}$ and identify $R$ with $\fn^*=\Hom_A(\fn, A)$). Thus $\ell_A(R/\fn)=2$, so $\ell_R(R/\fn)\leq 2$, which forces $\fm^2 \subset \fn$, and since $\fn$ is Elias, so is  $\fm^2$ by \ref{cor1}.  

\end{proof}

\begin{example}\label{ind2_ex}
We give some examples of item (3) in the previous Theorem. First let $A=\RR[[t,it]]$ with $i^2=-1$. Then $R =\CC[[t]]$. 

Next, let $H=\langle a_1,\dots, a_n \rangle$ be any symmetric semigroup and $b$ be the Frobenius number of $H$. Let $A=k[[H]]$ be the complete Gorenstein numerical semigroup ring 
of $H$. Then $R=k[[\langle a_1,\dots, a_n, b \rangle]]$ has Elias index $2$, unless if $H=\langle 2,3 \rangle$, in which case $\eli(R)=1$. 

Examples are $R=k[[ t^{e},t^{e+1}, t^{e^2-e-1}]]$ for $e\geq 3$. For such ring we have $\type(R)=2$, $e(R)=e$, $\gll(R)=e-1, \ulr(R)=e-1$, yet $\eli(R)=2$. These examples show that one can not hope to get upper bounds for $\gll(R)$ or $\ulr(R)$ just using $\eli(R)$. 
\end{example}

\section{Elias ideals in Gorenstein rings and Auslander index}\label{sec4} 

In this section we focus on Gorenstein rings. Throughout this section, let $(R, \fm, k)$ be a local Gorenstein ring of dimension one and $I\subset R$ an $\fm$-primary ideal. Recall that for a finitely generated module $M$, the Auslander $\delta$ invariant of $M$, $\delta(M)$ is defined to be the smallese number $s$ such that there is a surjection $R^s\oplus N\to M$. The first $s$ such that $\delta(R/\fm^s)=1$ is called the Auslander index of $R$, denoted $\ind(R)$. 

It turns out that Elias ideals are precisely those who quotient has Auslander invariant one. We collect here this fact and a few others. They are mostly known or can be deduced easily from results in previous sections, or both.  

\begin{prop}\label{GorThm} Let $(R, \fm, k)$ be a local Gorenstein ring of dimension one and $I\subset R$ an $\fm$-primary ideal. We have:
\begin{enumerate}
\item $\delta(R/I) = 1$ if and only if $I$ is Elias. 
\item Suppose $R$ is Gorenstein. Then $I$ is Elias if and only if for each NZD $x\in I$, $x\in \fm(x:I)$. 
\item Suppose $R$ is Gorenstein. For a NZD $x\in I$, $x:I$ is Elias if and only if $x\in \fm{I}$. In particular, if $x\in \fm^2$, then $x:\fm$ is Elias. 
\item $I$ is Elias if and only if  $1 \in \fm{I^{-1}}$, where $I^{-1}= R:_{Q}I$. If $I$ is Elias, then $I\subseteq \fm\tr(I)$.

\end{enumerate}

\end{prop}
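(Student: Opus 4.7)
The plan is to derive (4), (2), (3) directly from the results of Section~\ref{sec1}, and then to establish (1) by combining (2) with a characterization of the Auslander $\delta$-invariant.

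For (4), since $R$ is Gorenstein, any principal ideal $(x)$ with $x \in \fm$ a NZD is a canonical ideal. Applying Theorem~\ref{thmEli}(7) with this choice of $K$ gives $I$ Elias iff $(x) \subseteq \fm \cdot xI^{-1}$, which simplifies (after cancelling the NZD $x$) to $1 \in \fm I^{-1}$. Multiplying by $I$ and using $\tr(I) = I \cdot I^{-1}$ yields $I \subseteq \fm \tr(I)$. For (2), I would apply Corollary~\ref{cor1}(3) with $K = (x) \subseteq I$ for each NZD $x \in I$, reducing the Elias condition to $x \in \fm(x :_R I)$. For (3), apply Corollary~\ref{cor1}(4) with the same choice of $K$ to obtain that $(x):I$ is Elias iff $x \in \fm I$; the ``in particular'' clause is the case $I = \fm$.

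For (1), since $R \twoheadrightarrow R/I$ is surjective, $\delta(R/I) \leq 1$ is automatic, so the content is that $\delta(R/I) = 0$ iff $I$ is not Elias. For the ``if'' direction, if $I$ is not Elias then Theorem~\ref{thmEli}(6) provides $q \in (I :_Q \fm) \setminus R$; the fractional ideal $N := R + Rq \subseteq Q$ is a rank-one MCM, and the map $\phi: N \to R/I$ defined by $a + bq \mapsto \bar a$ is well-defined (using $R :_R q \subseteq \fm$, which holds since $q \notin R$, together with $q\fm \subseteq I$) and surjective. By Theorem~\ref{indexthm2}(1), $R$ must be non-regular in this situation; a short analysis of what $N = Ru$ would entail (it would force $\fm = (r)$ for some $r \in \fm$, contradicting non-regularity) rules out $N$ being principal, so as a rank-one module $N$ has no free summand and $\delta(R/I) = 0$.

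The converse direction for (1) is the main obstacle. Given any surjection $\phi: N \twoheadrightarrow R/I$ from an MCM $N$ without free summand, one must produce an element of $(I :_Q \fm) \setminus R$. When $N$ is rank one (a fractional ideal, normalized so that $R \subseteq N$ and $\phi(1) = \bar 1$), the socle of the finite-length module $N/R$ supplies $q \in N \setminus R$ with $q\fm \subseteq R$; subtracting $r \in R$ to kill $\phi(q)$ yields $q' := q - r \in \ker\phi$ with $q'\fm \subseteq R \cap \ker\phi = I$ and $q' \notin R$, as required. Handling higher-rank $N$ is more delicate: one can either reduce to the rank-one case via a standard decomposition of MCMs over one-dimensional Gorenstein rings into direct sums of fractional ideals, or bypass the issue by appealing to the classical characterization (due to Auslander and Ding) that $\delta(R/I) = 1$ iff $x \in \fm(x :_R I)$ for some NZD $x \in I$, which combined with (2) gives (1) at once.
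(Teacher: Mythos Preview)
Your treatment of (2), (3), (4) matches the paper's exactly: the paper derives (2) and (3) from parts (3) and (4) of Corollary~\ref{cor1} with $K=(x)$, and obtains (4) either by citing Ding or by rewriting (2), which is equivalent to your use of Theorem~\ref{thmEli}(7).

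For (1) the paper simply invokes Ding's result \cite[Proposition 1.2]{Din}; your attempt at a direct proof is a genuine addition. The implication ``$I$ not Elias $\Rightarrow \delta(R/I)=0$'' is essentially correct, and your sketch that $N=R+Rq$ cannot be principal can be completed: if $N$ were cyclic then $q$ itself would have to generate it (since $1$ generating gives $N=R$), so $q^{-1}=s\in\fm$ is a NZD, and $q\fm\subseteq I$ yields $\fm\subseteq sI\subseteq sR\subseteq\fm$, forcing $\fm=(s)$ and $R$ regular, contradicting the existence of a non-Elias ideal.

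The converse, however, has a real gap. The assertion that MCM modules over a one-dimensional Gorenstein local ring decompose into direct sums of fractional ideals is \emph{false} in general: beyond the ADE (finite CM type) range there exist indecomposable MCM modules of arbitrarily large rank, so the reduction to rank one is unavailable. Your fallback---citing the Auslander--Ding characterization $\delta(R/I)=1 \Leftrightarrow x\in\fm(x:I)$ for some NZD $x\in I$---is exactly what the paper does, and is the correct way to close the argument. You should drop the decomposition remark and present the citation as the proof, not as an alternative.
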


\begin{proof}
Part (1) is a special case of a result  by Ding, \cite[Proposition 1.2]{Din} and our definition of Elias ideal. Part (2) and (3) are special cases of (3) and (4) of \ref{cor1}, as in that case $(x)$ is isomorphic to the canonical module.

Part (4) is \cite[2.4, 2.5]{Din}, and also follows easily from results above: the first assertion is just a rewriting of (2).  For the second assertion, it follows from the first that $I\subseteq \fm II^{-1} = \fm\tr(I)$. 
\end{proof}

There have been considerable interest in the following question:

\begin{question}
Given an ideal $I$ with $\delta(R/I)=1$, when can one say that $I\subset (x)$ for some NZD $x\in \fm$?
\end{question}

For instance, a conjecture of Ding asks whether $\ind(R)=\gll(R)$ always.  From our point of view, this is of course just a question about Elias ideals and Elias index. Thus, one immediately obtains the following.

\begin{cor}\label{GorCor}
 Let $(R, \fm, k)$ be a local Gorenstein ring of dimension one and $I\subset R$ an $\fm$-primary ideal. 
 \begin{enumerate}
 \item If $I$ contains a NZD $x$ of order $1$, then $I$ is Elias if and only if $I=(x)$.
 \item $\ind(R)= \eli(R)$. 
\item $\ind(R)=\gll(R)=\ulr(R)+1$ if $k$ is infinite and $\gr_{\fm}(R)$ is Cohen-Macaulay (this happens for instance if $R$ is standard graded or if $R$ is a hypersurface). 
 \end{enumerate}
 \end{cor}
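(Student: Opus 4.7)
The plan is that all three parts are essentially formal consequences of results already established earlier in the paper, once we identify the right tools.

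For part (1), I would observe that since $R$ is Gorenstein of dimension one, every principal ideal generated by a NZD is isomorphic to $R$, and hence is a canonical ideal. So $(x)$ is a canonical ideal of $R$ with $\ord((x)) = \ord(x) = 1$. Then I would simply cite Corollary \ref{cor1}(5) applied to $K=(x) \subseteq I$: this immediately says $I$ is Elias if and only if $I=(x)$.

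For part (2), the argument is purely a matter of unwinding definitions: Proposition \ref{GorThm}(1) says that for any $\fm$-primary ideal $J$, $J$ is Elias if and only if $\delta(R/J)=1$. Applying this with $J=\fm^s$, the smallest $s$ such that $\fm^s$ is Elias is exactly the smallest $s$ such that $\delta(R/\fm^s)=1$. By the definitions, the former is $\eli(R)$ and the latter is $\ind(R)$, so they coincide.

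For part (3), I would combine part (2) with Theorem \ref{indexthm1}(3). Under the hypothesis that $k$ is infinite and $\gr_{\fm}(R)$ is Cohen-Macaulay, Theorem \ref{indexthm1}(3) gives $\eli(R)=\gll(R)=\ulr(R)+1$, and part (2) already gives $\ind(R) = \eli(R)$. For the parenthetical remark, standard graded or hypersurface rings have Cohen-Macaulay associated graded, so the hypothesis holds in those cases (this is classical and needs no new argument).

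The main obstacle, to the extent there is one, is merely bookkeeping: one must verify that the canonical ideal identification in part (1) is legitimate (Gorenstein plus dimension one makes $(x)\cong R \cong \omega_R$), and that Proposition \ref{GorThm}(1) is stated in sufficient generality to cover arbitrary $\fm$-primary ideals in part (2). Both are immediate, so the corollary is really a repackaging of existing results.
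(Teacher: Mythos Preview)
Your proposal is correct and follows essentially the same approach as the paper: part (1) via Corollary~\ref{cor1}(5), part (2) directly from Proposition~\ref{GorThm}(1), and part (3) from Theorem~\ref{indexthm1}(3) combined with part (2). The only difference is that you spell out the justification that $(x)$ is a canonical ideal in the Gorenstein setting, which the paper leaves implicit.
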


\begin{proof}
For part (1), we apply  (5) of Corollary \ref{cor1}.
Part (2) is trivial from part (1) of \ref{GorThm}. Part (3) is \cite[Theorem 2.1]{Din0}, \cite[Corollary 2.11]{DeS}, and  is also a consequence of \ref{indexthm1}.

\end{proof}

\begin{example}\label{Ding_ex}(Counter-examples to a result by Ding) In this example, we construct examples of homogenous Elias ideals that are not inside principal ideals. 

Let $S=k[[x_1\dots, x_n]]$, and $J$ be a homogenous ideal such that $R=S/J$ is Gorenstein. Let $f\in S$ be an irreducible element of degree at least $2$ but  lower than the initial degree of $J$, and such that the image of $f$ in $R$ is a NZD. Then $I=fR:\fm$ is Elias by \ref{GorThm} but $I$ is not inside any principal ideal. For by the irreducibility of $f$, we must have $fR:\fm=(f)$, absurd. 

This class of examples contradicts Theorem 3.1 in \cite{Din}, which claims that for $I$ homogenous in a graded Gorenstein $R$, $\delta(R/I)=1$ (equivalently, $I$ is Elias) if and only if $I\subseteq (x)$ for some $x\in \fm$. 

For concrete examples, one can take $S=\QQ[[a,b]]$, $J=(a^3 -b^3)$,  and $f=a^2+b^2$. If one wants algebraically closed field, one can take $S=\CC[[a,b,c]]$, $J$ is a complete intersection of two general cubics, and $f=a^2+b^2+c^2$. 

The mistake in \cite[Theorem 3.1]{Din} is as follows. First, one derives that $1=\sum \frac{z_iy_i}{x_i}$ with $z_i\in \fm$ and $\frac{y_i}{x_i}\in I^{-1}$ and hence there is $i$ such that $\deg(z_iy_i)=\deg(x_i)$, which is correct. Then Ding claimed that there is $u\in k$ such that $z_iy_i=ux_i$. But this is not true. In the first example above we have $z_1=y_1=a, z_2=y_2=b, x_1=x_2=a^2+b^2$.  
\end{example}

\begin{example}\label{DeS_ex}(De Stefani's counter-example to a conjecture of Ding, revisited) As mentioned above, Ding conjectured that $\ind(R)=\gll(R)$ always when $R$ is Gorenstein. De Stefani gives a clever counter-example in \cite{DeS}. Let $S=k[x,y,z]_{(x,y,z)}$, $I=(x^2-y^5, xy^2+yz^3-z^5)$. Then $\ind(R)=5$ but $\gll(R)=6$. We now show how some parts of the proof in  \cite{DeS}, which is quite involved, can be shortened using our results. 

We note that since the Hilbert functions of $R$ are $(1,3,5,6,7,7,8,8...)$ and $e(R)=8$, we get that $\fm^5$ is Elias  by Theorem \ref{Elimu}. To conclude we need to show that $\fm^5$ is not contained in $(y)$ for any NZD $y\in \fm$. Note that $\fm^{6}$ is Ulrich by Hilbert functions. We first show one can assume $\ord(y)=1$. Assume  $\fm^{5}\subset (y)$,  $\fm^{5}=yI$, then  
$\fm^{5}\cong I$. If $\ord(y)\geq 2$, then $y\fm^{3}\subset \fm^5=yI$, so $ \fm^{3}\subset I$. But as $\fm I\cong \fm^{6}$ is Ulrich, we get $\fm^2I\subset (x)$ for some minimal reduction of $\fm$, thus $\fm^5 \subset \fm^2I\subset (x)$. For the rest, one can follow \cite{DeS}. 

\end{example}


\begin{thebibliography}{99}
\addcontentsline{toc}{section}{Bibliography}




\bibitem{BH} W.~ Bruns and J.~Herzog, \emph{Cohen-Macaulay Rings}, Cambridge Studies in Advanced Mathematics, {\bf 39}, Cambridge, Cambridge University Press, 1993.

\bibitem{DeS} A. De Stefani, \emph{A counterexample to conjecture of Ding}, J. Algebra,  {\bf 452}, pp. 324--337, 2016.

\bibitem{dms}
{H. Dao, S. Maitra, P. Sridhar}, \emph{On reflexive and $I$-Ulrich modules over curves}, arXiv:2101.02641, Trans. of Amer. Math. Soc., to appear. 

\bibitem{DKT}
{H. Dao, T. Kobayashi and R. Takahashi}, \emph{Burch ideals and Burch rings}, {Algebra Number Theory},  Algebra Number Theory  {\bf 14} (2020), no. 8, 2121–2150.



\bibitem{Din0} 

{S. Ding}, \emph{The associated graded ring and the index of a Gorenstein local ring},
Proc. Amer. Math. Soc., {\bf 120} (4) (1994),1029--1033.


\bibitem{Din} 
{S. Ding}, \emph{Auslander’s $\delta$-invariants of Gorenstein local rings}, Proc. Amer. Math. Soc., {\bf 122} (3) (1994), 649--656. 



\bibitem{Eli}
{J. Elias}, \emph{On the canonical ideals of one-dimensional Cohen-Macaulay local rings}, Proc. Edinb. Math. Soc. (2) {\bf 59} (2016), no. 1, 77–90.

\bibitem{Goto}
{S. Goto}, \emph{Integral closedness of complete-intersection ideals}, J. Algebra {\bf108} (1987), no. 1, 151--160.


\bibitem{HRR} {W. Heinzer, L.J Ratliff and D.E. Rush}, \emph{Basically full ideals in local rings}, Journal of Algebra {\bf 250} (2002), 371--396.


\bibitem{HS} C.~Huneke and I.~Swanson, \emph{Integral closures of ideals, rings and modules}, London Math. Society Lecture Note Series {\bf 336}, Cambridge University Press, 2006.








\bibitem{FT} H-B. Foxby and A. Thorup, \emph{Minimal injective resolutions under flat base change}, Proc. Amer. Math. Soc., {\bf 67} (1): 27--31, 1977.




\bibitem{W}
{J. Watanabe}, \emph{m-full ideals}, {Nagoya Math. J.} {\bf 106} (1987), 101--111.
\end{thebibliography}
\end{document}